\title{Computing Maxwell eigenmodes with Bloch boundary conditions}
\author{Steffen B\"orm
        \thanks{Mathematisches Seminar,
                Christian-Albrechts-Universit\"at zu Kiel
                (\href{mail:boerm@math.uni-kiel.de}{boerm@math.uni-kiel.de},
                 website \url{https://www.math.uni-kiel.de/scicom})} \and
        Ralf K\"ohl
        \thanks{Mathematisches Seminar,
                Christian-Albrechts-Universit\"at zu Kiel
                (\href{mail:koehl@math.uni-kiel.de}{koehl@math.uni-kiel.de})} \and
        Nahid Talebi
        \thanks{Institut f\"ur Exp. und Ang. Physik,
                Christian-Albrechts-Universit\"at zu Kiel
                (\href{mail:talebi@physik.uni-kiel.de}{talebi@physik.uni-kiel.de})}}
\DeclareMathOperator{\scurl}{curl}
\DeclareMathOperator{\vcurl}{\mathbf{curl}}
\newtheorem{theorem}{Theorem}
\newtheorem{lemma}[theorem]{Lemma}
\newenvironment{proof}{\noindent\emph{Proof:}}{\hfill$\Box$}
\newcommand{\bbbc}{\mathbb{C}}
\newcommand{\bbbr}{\mathbb{R}}
\newcommand{\bbbn}{\mathbb{N}}
\newcommand{\Idx}{\mathcal{I}}
\newcommand{\Jdx}{\mathcal{J}}
\newcommand{\Hbloch}{H_\text{Bloch}(\scurl,\Omega,k)}
\begin{document}

\maketitle

\begin{abstract}
Our goal is to predict the band structure of photonic crystals.
This task requires us to compute a number of the smallest non-zero
eigenvalues of the time-harmonic Maxwell operator depending on the
chosen Bloch boundary conditions.

We propose to use a block inverse iteration preconditioned with a suitably
modified geometric multigrid method.
Since we are only interested in non-zero eigenvalues, we eliminate the
large null space by combining a lifting operator and a secondary
multigrid method.
To obtain suitable initial guesses for the iteration, we employ a generalized
extrapolation technique based on the minimization of the Rayleigh quotient
that significantly reduces the number of iteration steps and allows us to
treat families of very large eigenvalue problems efficiently.
\end{abstract}

\emph{Keywords:}
time-harmonic Maxwell equations, photonic crystals, multigrid,
preconditioned inverse iteration, edge elements

\emph{Mathematical Subject Classification:}
65N25, 65N55, 35Q61, 78M10


\section{Introduction}

A periodic optical medium, in the form of a grating, a multilayered geometry,
a patterned thin film, or a more general three-dimensional configuration, has
various applications in tailoring the properties of light.
Particularly, in analogy with electronic properties of crystalline matters,
the band structure of optical waves, i.e., the dispersion diagram of
individual optical modes in photonic crystals, is of great interest, since
it provides the opportunity to investigate the optical density of states
and the propagation properties of light in the lattice.
Due to the high symmetry of the structure in both real and reciprocal spaces,
the band structures are analyzed in the Voronoi cell, namely the Brillouin
zone.
Particularly, for realizing photonic cavities, slow waveguides, as well as
several platforms for enhanced light-matter interactions, searching for
photonic crystal configurations that offer a global bandgap is attractive.
Moreover, tailoring the phase of the optical waves on the lattice could allow
for exploring topological aspects as well.

The optical waves in a lattice are in the form of so-called Bloch waves,
constituting a plane wave modulated by a periodic function, where the latter
function sustains the periodicity of the lattice.
Thus, for numerically calculating the Bloch waves, it is sufficient to consider
only a unit cell of the lattice, combined with appropriate boundary conditions.

In this article, we focus on the two-dimensional setting,
i.e., we are looking for the smallest non-zero eigenvalues
$\lambda$ and corresponding eigenvectors $u\in H(\scurl,\Omega)$
satisfying the two-dimensional Maxwell equation
\begin{align}\label{eq:maxwell}
  \vcurl \frac{1}{\epsilon(x)} \scurl u(x) &= \lambda u(x) &
  &\text{ for all } x\in\bbbr^2.
\end{align}
Here the scalar- and vector-valued curl operators are given
by
\begin{align*}
  \scurl u(x) &= \partial_2 u_1(x) - \partial_1 u_2(x), &
  \vcurl \varphi(x) &= \begin{pmatrix}
    -\partial_2 \varphi(x)\\
    \partial_1 \varphi(x)
  \end{pmatrix}
\end{align*}
for $u\in H(\scurl,\Omega)$ and $\varphi\in H^1(\Omega)$.
We assume the dielectricity constant $\epsilon$ to be periodic
with period $a\in\bbbr_{>0}$ in the first coordinate and period
$b\in\bbbr_{>0}$ in the second, i.e.,
\begin{align}\label{eq:eps_periodic}
  \epsilon(x_1+a,x_2) &= \epsilon(x_1,x_2)
  = \epsilon(x_1,x_2+b) &
  &\text{ for all } x\in\bbbr^2.
\end{align}
Due to Bloch's theorem \cite{BL29}, the eigenvectors can be represented in
the factorized form
\begin{align*}
  u(x) &= \exp(\iota \langle k,x \rangle) \hat u(x) &
  &\text{ for all } x\in\bbbr^2,
\end{align*}
where $k\in\bbbr^2$ is the Bloch parameter and $\hat u$ is
periodic, i.e.,
\begin{align*}
  \hat u(x_1+a,x_2) &= \hat u(x_1,x_2)
  = \hat u(x_1,x_2+b) &
  &\text{ for all } x\in\bbbr^2.
\end{align*}
Applying these identities yields
\begin{subequations}\label{eq:bloch_shift}
\begin{align}
  u(x_1+a,x_2) &= \exp(\iota k_1 a) \exp(\iota \langle k,x \rangle)
                               \hat u(x_1+a,x_2)\notag\\
           &= \exp(\iota k_1 a) \exp(\iota \langle k,x \rangle)
                            \hat u(x_1,x_2)
            = \exp(\iota k_1 a) u(x_1,x_2),\\
  u(x_1,x_2+b) &= \exp(\iota k_2 b) \exp(\iota \langle k,x \rangle)
                               \hat u(x_1,b)\notag\\
           &= \exp(\iota k_2 b) \exp(\iota \langle k,x \rangle)
                               \hat u(x_1,x_2)
            = \exp(\iota k_2 b) u(x_1,x_2)
\end{align}
\end{subequations}
for all $x\in\bbbr^2$.
Taking advantage of this ``phase-shifted periodicity'' allows
us to restrict our attention to the fundamental domain with
respect to translation
\begin{equation*}
  \Omega := [0,a] \times [0,b]
\end{equation*}
subject to the Bloch boundary conditions
\begin{subequations}\label{eq:bloch_boundary}
\begin{align}
  u_1(x_1,b) &= \exp(\iota k_2 b)\, u_1(x_1,0) &
  &\text{ for all } x_1\in[0,a],\\
  u_2(a,x_2) &= \exp(\iota k_1 a)\, u_2(0,x_2) &
  &\text{ for all } x_2\in[0,b],
\end{align}
\end{subequations}
with the Bloch parameter $k\in[-\pi/a,\pi/a]\times[-\pi/b,\pi/b]$.
We therefore work with the subspace
\begin{equation*}
  \Hbloch
  := \{ u\in L^2(\Omega,\bbbc^2)\ :\ \scurl u\in L^2(\Omega,\bbbc),
            \ u \text{ satisfies } \cref{eq:bloch_boundary} \}
\end{equation*}
of $H(\scurl,\Omega)$.
Since \cref{eq:bloch_boundary} involves only the tangential traces
of $u\in H(\scurl,\Omega)$, this is a closed subspace of a Hilbert space
and therefore itself a Hilbert space.

Multiplying \cref{eq:maxwell} with a test function
$v\in\Hbloch$ and integrating by parts (cf. \cref{eq:partial_integration})
yields the variational formulation
\begin{align}\label{eq:variational}
  a(v,u) &= \lambda\, m(v,u) &
  &\text{ for all } v\in\Hbloch
\end{align}
with the sesquilinear forms
\begin{align*}
  a\colon \Hbloch\times\Hbloch &\to \bbbc, &
  (v,u) &\mapsto \int_\Omega
         \frac{\overline{\scurl v(x)} \scurl u(x)}{\epsilon(x)} \,dx,\\
  m\colon \Hbloch\times\Hbloch &\to \bbbc, &
  (v,u) &\mapsto \int_\Omega \langle v(x), u(x) \rangle \,dx.
\end{align*}
We discretize it using a Galerkin scheme with bilinear
N\'ed\'elec trial and test functions \cite{NE80} to obtain
a finite-dimensional eigenvalue problem
\begin{equation}\label{eq:matrix}
  A e = \lambda M e
\end{equation}
with a stiffness matrix $A\in\bbbc^{n\times n}$ and a mass
matrix $M\in\bbbc^{n\times n}$.
Since the bilinear forms of \cref{eq:variational} are Hermitian,
the matrices $A$ and $M$ are self-adjoint, $A$ is positive semi-definite,
and $M$ is positive definite.
These properties imply that we can find a biorthogonal basis consisting
of eigenvectors of the matrices $A$ and $M$.

When treating this eigenvalue problem numerically, we are faced with
two challenges:
on the one hand, the sesquilinear form $a$ has a large null space
consisting of gradients $\nabla\varphi$ of scalar functions $\varphi$.
This null space is not of interest in our application, and we would
like our numerical method to focus on the positive eigenvalues.
On the other hand, we not only have to solve one eigenvalue problem,
but a large number of eigenvalue problems for varying values of the
Bloch parameter $k$:
in order to find band gaps, we have to sample the entire rectangle
$[-\pi/a,\pi/a]\times[-\pi/b,\pi/b]$ of possible Bloch parameters at a
sufficiently fine resolution.

The first challenge can be met by using a discrete Helmholtz
decomposition \cite{HI98}:
if a bilinear edge element function is a gradient $\nabla\varphi$,
the corresponding potential $\varphi$ is a bilinear \emph{nodal}
function, i.e., it can be represented by the standard $Q_1$ nodal
basis.
We still have to address the question of boundary conditions:
if $\nabla\varphi\in\Hbloch$ holds, what are the appropriate
boundary conditions for $\varphi\in H^1(\Omega)$?
Our answer to this question is given in
\cref{le:potential_boundary_conditions}.

For the second challenge, we combine an extrapolation technique
based on the Rayleigh quotient with a preconditioned block inverse
iteration.
This approach allows us to compute the smallest non-zero eigenvalues
and a corresponding biorthogonal basis of eigenvectors for most
Bloch parameters $k$ with only a few iteration steps.

This text is organized as follows:
the following \cref{se:bloch_boundary_conditions} investigates
the influence of Bloch boundary conditions on the variational formulation
(cf. \cref{le:partial_integration}) and the Helmholtz decomposition
(cf. \cref{le:potential_boundary_conditions}).
Once the variational formulation is at our disposal, we consider in
\cref{se:discretization} the discretization with N\'ed\'elec's bilinear basis
functions of lowest order adjusted to handle the Bloch boundary conditions.
The discretization yields a generalized matrix eigenvalue problem that
we choose to solve with the preconditioned block inverse iteration
described in \cref{se:preconditioned_block_inverse_iteration} with
suitable modifications needed to handle the null space.
In \cref{se:geometric_multigrid_method} we describe the geometric multigrid
methods used in our implementation to provide a preconditioner for the
eigenvalue iteration and to remove the null space from the iteration vectors.
Since we have to solve a large number of eigenvalue problems in order
to cover the parameter domain, we employ a simple extrapolation technique
described in \cref{se:extrapolation} to obtain good initial values for
the eigenvalue iteration.
Our experiments indicate that the convergence of the preconditioned
inverse iteration can be improved significantly by computing a few
more eigenvectors than strictly required, and \cref{se:throwaway_eigenvectors}
describes this approach.
The final \cref{se:numerical_experiments} is devoted to numerical
experiments that indicate that our method performs as expected.

\section{Bloch boundary conditions}
\label{se:bloch_boundary_conditions}

The Bloch boundary conditions \cref{eq:bloch_boundary} have
a significant impact on the properties of the eigenvalue problem.
On the one hand, we have to verify that the variational formulation
\cref{eq:variational} is equivalent with the original problem
\cref{eq:maxwell}, particularly that no additional boundary
terms appear.
On the other hand, efficient numerical methods for Maxwell-type
problems rely on a Helmholtz decomposition, i.e., the decomposition
of $u\in H(\scurl,\Omega)$ into a gradient $u_0 := \nabla\varphi$
and a divergence-free function $u_1$.
If we impose Bloch boundary conditions for $u$, we have to investigate
what boundary conditions are appropriate for the potential $\varphi$ of
the Helmholtz decomposition.

We first consider how the Bloch boundary conditions influence
partial integration.
For this, we need a scalar counterpart of the Bloch boundary
conditions \cref{eq:bloch_boundary}:
For a function $\psi\in C^1(\bbbr^2)$, we consider the conditions
\begin{subequations}\label{eq:bloch_scalar}
\begin{align}
  \psi(x_1,b) &= \exp(\iota k_2 b)\, \psi(x_1,0) &
  &\text{ for all } x_1\in[0,a],\label{eq:bloch_scalar1}\\
  \psi(a,x_2) &= \exp(\iota k_1 a)\, \psi(0,x_2) &
  &\text{ for all } x_2\in[0,b].\label{eq:bloch_scalar2}
\end{align}
\end{subequations}
For a function $v$ with the boundary conditions \cref{eq:bloch_boundary}
and a function $\psi$ with the scalar boundary conditions
\cref{eq:bloch_scalar}, we can perform partial integration without
introducing additional boundary terms.

%
%
\begin{lemma}[Partial integration]
\label{le:partial_integration}
Let $\psi\in C^1(\Omega)$ satisfy the scalar Bloch boundary conditions
\cref{eq:bloch_scalar}, let $v\in C^1(\Omega,\bbbr^2)$ satisfy the
vector Bloch boundary conditions \cref{eq:bloch_boundary}.
We have
\begin{equation*}
  \int_\Omega \langle v(x), \vcurl \psi(x) \rangle \,dx
  = \int_\Omega \scurl \bar v(x) \psi(x) \,dx.
\end{equation*}
\end{lemma}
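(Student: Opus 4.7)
The plan is a direct calculation by componentwise integration by parts; the only thing to verify is that the boundary contributions on opposite sides of $\Omega$ cancel against each other thanks to the matching Bloch phase factors.

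First I would expand the integrand on the left-hand side using the componentwise definitions of $\vcurl$ and of the Hermitian inner product (which, judging from the definition of the form $a$, is conjugate-linear in the first argument). This gives
\begin{equation*}
  \int_\Omega \langle v, \vcurl \psi \rangle \,dx
  = -\int_\Omega \overline{v_1(x)}\,\partial_2 \psi(x)\,dx
    + \int_\Omega \overline{v_2(x)}\,\partial_1 \psi(x)\,dx.
\end{equation*}
By Fubini's theorem I would then apply one-dimensional integration by parts to each summand: in $x_2$ for the first integral and in $x_1$ for the second. The interior contributions combine to give exactly $\int_\Omega(\partial_2\overline{v_1}-\partial_1\overline{v_2})\psi\,dx = \int_\Omega \scurl \bar v\,\psi\,dx$, which is the right-hand side of the claim.

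What remains is to show that the boundary terms vanish. For the first integral the boundary term is $-\int_0^a\bigl[\overline{v_1(x_1,b)}\psi(x_1,b) - \overline{v_1(x_1,0)}\psi(x_1,0)\bigr]dx_1$. Substituting the vector Bloch condition $v_1(x_1,b) = e^{\iota k_2 b} v_1(x_1,0)$ and the scalar Bloch condition $\psi(x_1,b) = e^{\iota k_2 b}\psi(x_1,0)$, the conjugate on $v_1$ turns its phase into $e^{-\iota k_2 b}$, which cancels the $e^{\iota k_2 b}$ coming from $\psi$; thus the integrand at $x_2=b$ reduces to $\overline{v_1(x_1,0)}\psi(x_1,0)$ and the two boundary contributions cancel. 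The same argument with $k_1$, $a$ in place of $k_2$, $b$ handles the boundary term for the second integral.

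Adding the two identities yields the claim. There is no serious obstacle; the entire point of the lemma is precisely that the Bloch phase on $\psi$ is conjugate to the Bloch phase on $v$ after the Hermitian pairing is applied, so the periodic cell behaves, for the purposes of partial integration, exactly like a closed manifold.
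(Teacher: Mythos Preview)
Your proof is correct and follows essentially the same route as the paper's own argument. The only cosmetic difference is that the paper packages the integration by parts as a single application of Gauss's theorem, producing the boundary integral $\int_{\partial\Omega}\langle v,t\rangle\psi\,dx$ with the counter-clockwise tangent $t$, whereas you carry out the two one-dimensional integrations by parts separately via Fubini; the resulting boundary terms and the phase-cancellation argument are identical.
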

\begin{proof}
Using Gauss's theorem, we find
\begin{align*}
  \int_\Omega \langle v(x), \vcurl \psi(x) \rangle \,dx
  &= \int_\Omega (\bar v_2(x) \partial_1 \psi(x)
                 - \bar v_1(x) \partial_2 \psi(x)) \,dx\\
  &= -\int_\Omega (\partial_1 \bar v_2(x) \psi(x)
                  - \partial_2 \bar v_1(x) \psi(x)) \,dx\\
  &\quad + \int_{\partial\Omega} (n_1(x) \bar v_2(x) \psi(x)
                              - n_2(x) \bar v_1(x) \psi(x))\,dx\\
  &= \int_\Omega \scurl \bar v(x) \psi(x) \,dx
   + \int_{\partial\Omega} \langle v(x), t(x) \rangle \psi(x) \,dx,
\end{align*}
where $n:\partial\Omega\to\bbbr^2$ is the unit outer normal vector
and
\begin{align*}
  t(x) &:= \begin{pmatrix}
             -n_2(x)\\ n_1(x)
           \end{pmatrix} &
  &\text{ for all } x\in\partial\Omega
\end{align*}
is the counter-clockwise unit tangential vector.
Using the boundary conditions \cref{eq:bloch_boundary} and
\cref{eq:bloch_scalar}, we find
\begin{align*}
  \int_{\partial\Omega}
  \langle v(x), t(x) \rangle \psi(x) \,dx
  &= \int_0^a \bar v_1(x_1,0) \psi(x_1,0) \,dx_1
   - \int_0^a \bar v_1(x_1,b) \psi(x_1,b) \,dx_1\\
  &\quad + \int_0^b \bar v_2(a,x_2) \psi(a,x_2) \,dx_2
   - \int_0^b \bar v_2(0,x_2) \psi(0,x_2) \,dx_2\\
  &= \int_0^a \bar v_1(x_1,0) \psi(x_1,0) \,dx_1\\
  &\quad - \int_0^a \exp(-ik_2 b) \bar v_1(x_1,0)
                   \exp(\iota k_2 b) \psi(x_1,0) \,dx\\
  &\quad + \int_0^b \exp(-ik_1 a) \bar v_2(0,x_2)
                   \exp(\iota k_1 a) \psi(0,x_2) \,dx_2\\
  &\quad - \int_0^b \bar v_2(0,x_2) \psi(0,x_2) \,dx_2 = 0.
\end{align*}
\end{proof}
In order to derive the variational formulation \cref{eq:variational},
we have to apply this identity to $\psi = \scurl u$, where
$u$ is the solution of the partial differential equation.
Using Bloch's theorem again, we find a periodic function $\hat u$
such that
\begin{align*}
  u(x) &= \exp(\iota \langle k, x \rangle) \hat u(x) &
  &\text{ for all } x\in\bbbr^2.
\end{align*}
Using the chain rule yields
\begin{align*}
  \psi(x) &= \scurl u(x)
   = \partial_2 u_1(x) - \partial_1 u_2(x)\\
  &= i k_2 \exp(\iota \langle k, x \rangle) \hat u_1(x)
   + \exp(\iota \langle k, x \rangle) \partial_2 \hat u_1(x)\\
  &\quad + i k_1 \exp(\iota \langle k, x \rangle) \hat u_2(x)
   + \exp(\iota \langle k, x \rangle) \partial_1 \hat u_2(x)\\
  &= \exp(\iota \langle k, x \rangle) \bigl(
      i k_2 \hat u_1(x) - i k_1 \hat u_2(x)
      + \scurl \hat u(x) \bigr)\quad\text{ for all } x\in\bbbr^2.
\end{align*}
Using this equation, we can verify that the Bloch boundary
conditions \cref{eq:bloch_scalar} hold:
using the periodicity of $\hat u$, we find
\begin{align*}
  \psi(x_1,b)
  &= \exp\bigl(\iota (k_1 x_1 + k_2 b)\bigr)
     \bigl( i k_2 \hat u_1(x_1,b) - i k_1 \hat u_2(x_1,b)
            + \scurl \hat u(x_1,b) \bigr)\\
  &= \exp(\iota k_2 b) \exp(\iota k_1 x_1)
     \bigl( i k_2 \hat u_1(x_1,0) - i k_1 \hat u_2(x_1,0)
            + \scurl \hat u(x_1,0) \bigr)\\
  &= \exp(\iota k_2 b)\, \psi(x_1,0)
      \quad\text{ for all } x_1\in[0,a],\\
  \psi(a,x_2)
  &= \exp\bigl(\iota (k_1 a + k_2 x_2)\bigr)
     \bigl( i k_2 \hat u_1(a,x_2) - i k_1 \hat u_2(a,x_2)
            + \scurl \hat u(a,x_2) \bigr)\\
  &= \exp(\iota k_1 a) \exp(\iota k_2 x_2)
     \bigl( i k_2 \hat u_1(0,x_2) - i k_1 \hat u_2(0,x_2)
            + \scurl \hat u(0,x_2) \bigr)\\
  &= \exp(\iota k_1 a)\, \psi(0,x_2)
      \quad\text{ for all } x_2\in[0,b].
\end{align*}
Since the permittivity function $\epsilon$ is periodic, the
boundary conditions \cref{eq:bloch_scalar} also hold
for $\psi = \tfrac{1}{\epsilon} \scurl u$, therefore the boundary terms
appearing in the partial integration cancel and we obtain
\begin{align}\label{eq:partial_integration}
  \int_\Omega \langle v(x), \vcurl
                    \frac{1}{\epsilon(x)} \scurl u(x) \rangle \,dx
  &= \int_\Omega \frac{\scurl \bar v(x) \scurl u(x)}{\epsilon(x)} \,dx
    \quad\text{ for all } v\in C^1(\Omega,\bbbr^2).
\end{align}
Since $C^1(\Omega,\bbbr^2)$, equipped with the Bloch boundary
conditions \cref{eq:bloch_boundary}, is a dense subspace of $\Hbloch$,
we find that every solution of \cref{eq:maxwell} is also a
solution of the variational problem \cref{eq:variational}.

Now we can consider the second issue with Bloch boundary
conditions:
how do they influence the Helmholtz decomposition?

%
%
\begin{lemma}[Potentials]
\label{le:potential_boundary_conditions}
Let $\varphi\in C^1(\Omega)$ with $\nabla\varphi\in\Hbloch$.
If $k\neq 0$, there is a constant $m\in\bbbc$ such that
$\hat\varphi:=\varphi-m$ satisfies the scalar Bloch boundary conditions
\cref{eq:bloch_scalar}.

Otherwise, i.e., in the special case $k=0$, there is a linear polynomial
$\mu$ such that $\hat\varphi:=\varphi-\mu$ satisfies these conditions.
\end{lemma}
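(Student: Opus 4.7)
The plan is to reduce the conditions to a pair of linear equations in the unknown $m$ (or in the coefficients of $\mu$) and to exploit a corner compatibility identity forced by the hypothesis $\nabla\varphi\in\Hbloch$. Concretely, I introduce the two ``edge defects''
\begin{align*}
  g_1(x_1) &:= \varphi(x_1,b) - \exp(\iota k_2 b)\,\varphi(x_1,0), &
  &x_1\in[0,a],\\
  g_2(x_2) &:= \varphi(a,x_2) - \exp(\iota k_1 a)\,\varphi(0,x_2), &
  &x_2\in[0,b].
\end{align*}
The scalar Bloch conditions for $\hat\varphi=\varphi-m$ are then equivalent to the two scalar equations $g_1\equiv m(1-\exp(\iota k_2 b))$ and $g_2\equiv m(1-\exp(\iota k_1 a))$.

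The first step is to show that $g_1$ and $g_2$ are constants. Differentiating $g_1$ and applying the tangential vector Bloch condition on the component $(\nabla\varphi)_1=\partial_1\varphi$ gives $g_1'(x_1)=\partial_1\varphi(x_1,b)-\exp(\iota k_2 b)\partial_1\varphi(x_1,0)=0$, and the analogous computation for $g_2$ uses the second Bloch condition on $\partial_2\varphi$. Set $c_1:=g_1$ and $c_2:=g_2$.

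The step I expect to carry the real content is the compatibility at the corner $(a,b)$. Evaluating $\varphi(a,b)$ first via the $g_1$-relation applied at $x_1=a$ (after reaching $(a,0)$ from $(0,0)$ via $g_2$), and then via the $g_2$-relation applied at $x_2=b$ (after reaching $(0,b)$ from $(0,0)$ via $g_1$), and equating the two expressions yields
\begin{equation*}
  c_1\bigl(1-\exp(\iota k_1 a)\bigr)
  = c_2\bigl(1-\exp(\iota k_2 b)\bigr).
\end{equation*}
For $k\neq 0$, at least one of $k_1,k_2$ is nonzero and, because $k_1 a,k_2 b\in[-\pi,\pi]$, the corresponding factor $1-\exp(\iota k_j \cdot)$ is nonzero. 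Picking that index $j$, I set $m$ so that the $j$-th of the two required equations holds; the compatibility identity then automatically delivers the other equation (in the subcase where the remaining $k$-component vanishes, the compatibility instead forces the corresponding $c$ to be zero, and the remaining equation is satisfied vacuously for that choice of $m$).

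For the exceptional case $k=0$, both target equations collapse to $c_1=0$ and $c_2=0$, which need not hold, so subtracting a constant no longer suffices. Instead I pass to the ansatz $\mu(x_1,x_2):=\alpha x_1+\beta x_2$ and check directly that $\hat\varphi=\varphi-\mu$ satisfies the degenerate conditions $\hat\varphi(x_1,b)=\hat\varphi(x_1,0)$ and $\hat\varphi(a,x_2)=\hat\varphi(0,x_2)$ precisely when $\beta b=c_1$ and $\alpha a=c_2$; these two scalar equations have a unique solution $\alpha=c_2/a$, $\beta=c_1/b$, and the compatibility identity becomes the trivial $0=0$, so no further obstruction appears.
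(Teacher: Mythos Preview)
Your proof is correct and follows essentially the same route as the paper: both arguments show the two edge defects are constant (you by differentiating and invoking the tangential Bloch conditions on $\nabla\varphi$, the paper by integrating via the fundamental theorem of calculus), derive the same corner compatibility relation $c_1(1-\alpha)=c_2(1-\beta)$ with $\alpha=\exp(\iota k_1 a)$, $\beta=\exp(\iota k_2 b)$, and then split into the cases $\alpha\neq 1$, $\beta\neq 1$, $\alpha=\beta=1$ with the same explicit choices of $m$ and $\mu$ (up to an irrelevant additive constant in the $k=0$ case).
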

\begin{proof}
Let $\alpha := \exp(\iota k_1 a)$ and $\beta := \exp(\iota k_2 b)$.
By the fundamental theorem of calculus and \cref{eq:bloch_boundary},
we have
\begin{subequations}
\begin{align}
  \varphi(x_1,b) - \varphi(0,b)
  &= \int_0^{x_1} \partial_1\varphi(t,b) \,dt
   = \int_0^{x_1} u_1(t,b) \,dt
   = \beta \int_0^{x_1} u_1(t,0) \,dt\notag\\
  &= \beta \int_0^{x_1} \partial_1\varphi(t,0) \,dt
   = \beta \bigl( \varphi(x_1,0) - \varphi(0,0) \bigr)
     \label{eq:fundamental_beta}
\end{align}
for all $x_1\in[0,a]$.
In order to satisfy \cref{eq:bloch_scalar1}, we have to ensure
\begin{equation*}
  \beta\, \varphi(x_1,0)
  = \varphi(x_1,b)
  = \beta \bigl( \varphi(x_1,0) - \varphi(0,0) \bigr) + \varphi(0,b)
\end{equation*}
for all $x_1\in[0,a]$, i.e., $\beta \varphi(0,0) = \varphi(0,b)$.
By the same reasoning, we find
\begin{align}
  \varphi(a,x_2) - \varphi(a,0)
  &= \int_0^{x_2} \partial_2\varphi(a,s) \,ds
   = \int_0^{x_2} u_2(a,s) \,ds
   = \alpha \int_0^{x_2} u_2(0,s) \,ds\notag\\
  &= \alpha \int_0^{x_2} \partial_2\varphi(0,s) \,ds
   = \alpha \bigl(\varphi(0,x_2) - \varphi(0,0)\bigr)
     \label{eq:fundamental_alpha}
\end{align}
\end{subequations}
for all $x_2\in[0,b]$, and satisfying \cref{eq:bloch_scalar2}
is equivalent with
\begin{equation*}
  \alpha \varphi(0,x_2)
  = \varphi(a,x_2)
  = \alpha \bigl(\varphi(0,x_2) - \varphi(0,0)\bigr) + \varphi(a,0)
\end{equation*}
for all $x_2\in[0,b]$, i.e., $\alpha \varphi(0,0) = \varphi(a,0)$.
Our task is now to see that we can satisfy both
$\alpha \varphi(0,0) = \varphi(a,0)$ and
$\beta \varphi(0,0) = \varphi(0,b)$ simultaneously.
Substituting $x_1=a$ in \cref{eq:fundamental_beta} and $x_2=b$
in \cref{eq:fundamental_alpha} implies
\begin{subequations}\label{eq:alpha_beta_compatibility}
\begin{align}
  \varphi(a,b) &= \beta\bigl( \varphi(a,0) - \varphi(0,0) \bigr)
                 + \varphi(0,b),\\
  \varphi(a,b) &= \alpha\bigl( \varphi(0,b) - \varphi(0,0) \bigr)
                 + \varphi(a,0).
\end{align}
\end{subequations}
We distinguish three cases: $\alpha\neq 1$, $\beta\neq 1$, and
$\alpha=\beta=1$.

\emph{Case 1:}
We assume $\alpha\neq 1$ and let
\begin{align*}
  m &:= \frac{\varphi(a,0) - \alpha \varphi(0,0)}{1-\alpha}, &
  \hat\varphi &:= \varphi-m.
\end{align*}
This choice implies
\begin{equation*}
  \alpha\hat\varphi(0,0)
  = \alpha\varphi(0,0) - m + (1-\alpha) m
  = \varphi(a,0) - m
  = \hat\varphi(a,0).
\end{equation*}
The equations \cref{eq:alpha_beta_compatibility} also hold for
$\hat\varphi$ and we obtain
\begin{align*}
  \alpha \hat\varphi(0,b)
  &= \hat\varphi(a,b)
   = \beta \bigl( \alpha\hat\varphi(0,0) - \hat\varphi(0,0) \bigr)
       + \hat\varphi(0,b),\\
  (\alpha-1) \hat\varphi(0,b)
  &= \beta (\alpha-1) \hat\varphi(0,0),
\end{align*}
and this implies $\beta\hat\varphi(0,0) = \hat\varphi(0,b)$.

\emph{Case 2:}
We assume $\beta\neq 1$ and let
\begin{align*}
  m &:= \frac{\varphi(b,0) - \beta \varphi(0,0)}{1-\beta}, &
  \hat\varphi &:= \varphi+m.
\end{align*}
This choice implies
\begin{equation*}
  \beta\hat\varphi(0,0)
  = \beta\varphi(0,0) - m + (1-\beta) m
  = \varphi(0,b) - m
  = \hat\varphi(0,b).
\end{equation*}
Again we use that the equations \cref{eq:alpha_beta_compatibility}
also hold for $\hat\varphi$ to find
\begin{align*}
  \beta \hat\varphi(a,0)
  &= \hat\varphi(a,b)
   = \alpha \bigl( \beta\hat\varphi(0,0) - \hat\varphi(0,0) \bigr)
       + \hat\varphi(a,0),\\
  (\beta-1) \hat\varphi(a,0)
  &= \alpha (\beta-1) \hat\varphi(0,0),
\end{align*}
and this implies $\alpha\hat\varphi(0,0) = \hat\varphi(a,0)$.

\emph{Case 3:}
We assume $\alpha=\beta=1$, i.e., $k=0$, and define
\begin{align*}
  \mu(x) &:= \frac{\varphi(a,0) - \varphi(0,0)}{2 a} (2 x_1 - a)
           + \frac{\varphi(0,b) - \varphi(0,0)}{2 b} (2 x_2 - b) &
  &\text{ for all } x\in\bbbr^2.
\end{align*}
Let $\hat\varphi := \varphi - \mu$.
We find
\begin{align*}
  \hat\varphi(a,0)
  &= \varphi(a,0) - \mu(a,0)
   = \varphi(a,0) - \frac{\varphi(a,0) - \varphi(0,0)}{2}
     + \frac{\varphi(0,b) - \varphi(0,0)}{2}\\
  &= \frac{\varphi(a,0) + \varphi(0,0)}{2}
     + \frac{\varphi(0,b) - \varphi(0,0)}{2}\\
  &= \varphi(0,0) + \frac{\varphi(a,0) - \varphi(0,0)}{2}
     + \frac{\varphi(0,b) - \varphi(0,0)}{2}\\
  &= \varphi(0,0) - \mu(0,0) = \hat\varphi(0,0),\\
  \hat\varphi(0,b)
  &= \varphi(0,b) - \mu(0,b)
   = \varphi(0,b) + \frac{\varphi(a,0) - \varphi(0,0)}{2}
     - \frac{\varphi(0,b) - \varphi(0,0)}{2}\\
  &= \frac{\varphi(a,0) - \varphi(0,0)}{2}
     + \frac{\varphi(0,b) + \varphi(0,0)}{2}\\
  &= \varphi(0,0) + \frac{\varphi(a,0) - \varphi(0,0)}{2}
     + \frac{\varphi(0,b) - \varphi(0,0)}{2}\\
  &= \varphi(0,0) - \mu(0,0) = \hat\varphi(0,0).
\end{align*}
Now we can use \cref{eq:fundamental_beta} with $\beta=1$ to find
\begin{align*}
  \hat\varphi(x_1,b) - \hat\varphi(0,b)
  &= \varphi(x_1,b) - \varphi(0,b)
     - \frac{\varphi(a,0) - \varphi(0,0)}{a} x_1\\
  &= \varphi(x_1,0) - \varphi(0,0)
     - \frac{\varphi(a,0) - \varphi(0,0)}{a} x_1\\
  &= \hat\varphi(x_1,0) - \hat\varphi(0,0)
   = \hat\varphi(x_1,0) - \hat\varphi(0,b),\\
\intertext{for all $x_1\in[0,a]$ and similarly
\cref{eq:fundamental_alpha} with $\alpha=1$ to get}
  \hat\varphi(a,x_2) - \hat\varphi(a,0)
  &= \varphi(a,x_2) - \varphi(a,0)
     - \frac{\varphi(0,b) - \varphi(0,0)}{b} x_2\\
  &= \varphi(0,x_2) - \varphi(0,0)
     - \frac{\varphi(0,b) - \varphi(0,0)}{b} x_2\\
  &= \hat\varphi(0,x_2) - \hat\varphi(0,0)
   = \hat\varphi(0,x_2) - \hat\varphi(a,0)
\end{align*}
for all $x_2\in[0,b]$, i.e., $\hat\varphi$ satisfies the scalar
Bloch boundary conditions \cref{eq:bloch_scalar}.
\end{proof}

We can conclude that Bloch boundary conditions can serve a similar
purpose as the widely used Dirichlet boundary conditions:
when deriving the variational formulation, they eliminate boundary
terms appearing during partial integration, and when applying
the Helmholtz decomposition, they ensure uniqueness of the
gradient if $k\neq 0$ and uniqueness up to an explicitly known
two-dimensional subspace, i.e., the gradients of linear polynomials,
in the special case $k=0$.

\section{Discretization}
\label{se:discretization}

We discretize the variational eigenvalue problem \cref{eq:variational}
on a regular rectangular mesh $\mathcal{T}$ using N\'ed\'elec's
bilinear edge elements \cite{NE80}:
we choose $n,m\in\bbbn$ and split the domain $\Omega$ into
$n\times m$ rectangular mesh cells of width $h_1 := a/n$ and height
$h_2 := b/m$ given by
\begin{align*}
  \Omega_i &:= [(i_1-1) h_1, i_1 h_1]\times[(i_2-1) h_2, i_2 h_2] &
  &\text{ for all } i\in\Idx:=[1:n]\times[1:m].
\end{align*}
We modify the standard definition of N\'ed\'elec's edge element
basis functions to include the Bloch boundary conditions
\cref{eq:bloch_boundary}:
the support of basis functions on the top horizontal edge wraps over to
the lower edge, and the value on the top edge is equal to the value on the
bottom edge multiplied by $\exp(\iota k_2 b)$.
\begin{align*}
  b_{x,i}(x) &:= \begin{cases}
               \frac{1}{h_1} (x_2/h_2 - i_2 + 1, 0)
               &\text{ if } x\in\Omega_i,\ i_2<m,\\
               \frac{1}{h_1} (i_2 + 1 - x_2/h_2, 0)
               &\text{ if } x\in\Omega_{i_1,i_2+1},\ i_2<m\\
               \frac{\exp(\iota k_2 b)}{h_1} (x_2/h_2 - i_2 + 1,0)
               &\text{ if } x\in\Omega_i,\ i_2=m,\\
               \frac{1}{h_1} (1 - x_2/h_2, 0)
               &\text{ if } x\in\Omega_{i_1,1},\ i_2=m\\
               (0,0) &\text{ otherwise}
             \end{cases}
\intertext{for all $i\in\Idx$, $x\in\Omega$.
For basis functions corresponding to vertical edges, we incorporate
the Bloch boundary conditions by setting the value on the right vertical
edge by multiplying the values on the left vertical edge by
$\exp(\iota k_1 a)$.}
  b_{y,i}(x) &:= \begin{cases}
               \frac{1}{h_2} (0, x_1/h_1 - i_1 + 1, 0)
               &\text{ if } x\in\Omega_i,\ i_1<n,\\
               \frac{1}{h_2} (0, i_1 + 1 - x_1/h_1)
               &\text{ if } x\in\Omega_{i_1+1,i_2},\ i_1<n\\
               \frac{\exp(\iota k_1 a)}{h_2} (0, x_1/h_1 - i_1 + 1)
               &\text{ if } x\in\Omega_i,\ i_1=n,\\
               \frac{1}{h_2} (0, 1 - x_1/h_1)
               &\text{ if } x\in\Omega_{1,i_2},\ i_1=n\\
               (0,0) &\text{ otherwise}
             \end{cases}
\end{align*}
for all $i\in\Idx$, $x\in\Omega$.
The N\'ed\'elec space with Bloch boundary conditions is given by
\begin{equation*}
  V_h := \mathop{\operatorname{span}}\{
           b_{x,i},\ b_{y,i}\ :\ i\in\Idx \}.
\end{equation*}
In order to handle the null space of the $\scurl$ operator, we
also need the space of scalar bilinear functions with scalar Bloch
boundary conditions \cref{eq:bloch_scalar} on the same grid.
The basis functions are defined using the one-dimensional hat functions
\begin{align*}
  \phi_{x,i}(x)
  &:= \begin{cases}
    x/h_1-i+1 &\text{ if } x\in[(i-1)h_1,ih_1],\ i<n,\\
    i+1-x/h_1 &\text{ if } x\in[ih_1,(i+1)h_1],\ i<n,\\
    \exp(\iota k_1 a) (x/h_1-i+1) &\text{ if } x\in[(i-1)h_1,ih_1],\ i=n,\\
    1-x/h_1 &\text{ if } x\in[0,h_1],\ i=n,
  \end{cases}\\
  \phi_{y,j}(y)
  &:= \begin{cases}
    y/h_2-j+1 &\text{ if } y\in[(j-1)h_2,jh_2],\ j<m,\\
    j+1-y/h_2 &\text{ if } y\in[jh_2,(j+1)h_2],\ j<m,\\
    \exp(\iota k_2 b) (y/h_2-j+1) &\text{ if } y\in[(j-1)h_2,jh_2],\ j=m,\\
    1-y/h_2 &\text{ if } y\in[0,h_2],\ j=m
  \end{cases}
\end{align*}
defined for $x\in[0,a]$, $y\in[0,b]$, $i\in[1:n]$ and $j\in[1:m]$ and
taking the one-dimensional counterparts of the Bloch boundary conditions
\cref{eq:bloch_scalar} into account.
The bilinear nodal basis functions are defined by the tensor products
\begin{align*}
  \varphi_i(x) &:= \phi_{x,i_1}(x_1) \phi_{y,i_2}(x_2) &
  &\text{ for all } i\in\Idx,\ x\in\Omega
\end{align*}
and satisfy \cref{eq:bloch_scalar} by definition.
The nodal space with Bloch boundary conditions \cref{eq:bloch_scalar} is
given by
\begin{equation*}
  W_h := \mathop{\operatorname{span}}\{ \varphi_i\ :\ i\in\Idx \}.
\end{equation*}
We can see that
\begin{align}\label{eq:potential_discrete}
  \nabla\varphi_i
  &= b_{x,i} - b_{x,(i_1,i_2+1)}
     + b_{y,i} - b_{y,(i_1+1,i_2)} &
  &\text{ for all } i\in\Idx,
\end{align}
i.e., gradients of nodal basis functions can be expressed exactly and
explicitly in terms of four edge basis functions.
Using the Helmholtz decomposition, the well-known properties of N\'ed\'elec
elements and \cref{le:potential_boundary_conditions}, we can prove in the
case $k\neq 0$ that for every $u_h\in V_h$ with $\scurl u_h=0$, there is a
$\varphi_h\in W_h$ with $u_h = \nabla\varphi_h$.
This property allows us to eliminate the null space of the bilinear
form $a(\cdot,\cdot)$ in our algorithm.

In the special case $k=0$, we can still eliminate the null space up to
a two-dimensional remainder that is explicitly known.

The matrices
$A_{xx},A_{xy},A_{yy},M_{xx},M_{xy},M_{yy},G\in\bbbc^{\Idx\times\Idx}$
resulting from a standard Ga\-ler\-kin discretization are given by
\begin{gather*}
  a_{xx,ij} := a(b_{x,i}, b_{x,j}),\quad
  a_{xy,ij} := a(b_{x,i}, b_{y,j}),\quad
  a_{yy,ij} := a(b_{y,i}, b_{y,j}),\\
  m_{xx,ij} := m(b_{x,i}, b_{x,j}),\quad
  m_{xy,ij} := m(b_{x,i}, b_{y,j}),\quad
  m_{yy,ij} := m(b_{y,i}, b_{y,j}),\\
  g_{ij} := m(\nabla\varphi_i, \nabla\varphi_j)
\end{gather*}
for all $i,j\in\Idx$.
In addition, we introduce the \emph{lifting matrices}
$L_x,L_y\in\bbbc^{\Idx\times\Idx}$
such that
\begin{align*}
  \nabla\varphi_j &= \sum_{i\in\Idx}
    \begin{pmatrix}
      \ell_{x,ij} b_{x,i}\\
      \ell_{y,ij} b_{y,j}
    \end{pmatrix} &
  &\text{ for all } j\in\Idx.
\end{align*}
These matrices exist due to \cref{eq:potential_discrete}.
To ease notation, we introduce the block matrices
\begin{gather*}
  A := \begin{pmatrix}
    A_{xx} & A_{xy}^*\\
    A_{xy} & A_{yy}
  \end{pmatrix}\in\bbbc^{\Jdx\times\Jdx},\quad
  M := \begin{pmatrix}
    M_{xx} & M_{xy}^*\\
    M_{xy} & M_{yy}
  \end{pmatrix}\in\bbbc^{\Jdx\times\Jdx},\quad
  L := \begin{pmatrix}
    L_x\\
    L_y
  \end{pmatrix}\in\bbbc^{\Jdx\times\Idx}
\end{gather*}
with $\Jdx=\Idx\times\{1,2\}$ and $|\Jdx|=2nm$ in
order to obtain the desired form \cref{eq:matrix} of the discrete
eigenvalue problem.

\section{Preconditioned block inverse iteration}
\label{se:preconditioned_block_inverse_iteration}

We are interested in computing a few of the smallest non-zero
eigenvalues and the corresponding eigenvectors.
We base our approach on the preconditioned inverse iteration (PINVIT)
\cite{SA58,BRPAKN96,KNNE03}:
to find an eigenvector of \cref{eq:matrix}, we consider the sequence
$(e^{(m)})_{m=0}^\infty$ in $\bbbc^{\Jdx}\setminus\{0\}$ defined by
\begin{align*}
  e^{(m+1)} &= e^{(m)} - B (A e^{(m)} - \lambda_m M e^{(m)}) &
  &\text{ for all } m\in\bbbn_0,
\end{align*}
where $B$ is an approximation of $A^{-1}$ and
\begin{align*}
  \lambda_m &:= \frac{\langle e^{(m)}, A e^{(m)} \rangle}
                     {\langle e^{(m)}, M e^{(m)} \rangle} &
  &\text{ for all } m\in\bbbn_0
\end{align*}
is the generalized Rayleigh quotient.
We can see that any solution of the generalized eigenvalue problem
\cref{eq:matrix} is a fixed point of this iteration and that in
the case $B=A^{-1}$ it is identical (up to scaling) to the standard
inverse iteration.

For our application, we have to make a few adjustments:
every gradient of a scalar potential is in the null space of
the $\scurl$ operator, and we are not interested in the zero eigenvalue
of inifinite multiplicity.
Fortunately, N\'ed\'elec edge elements \cite{NE80} offer an elegant
solution: on the one hand, they avoid ``spurious modes'' that trouble
standard nodal finite element methods, on the other hand, all
elements of the discrete null space are gradients of scalar
piecewise polynomial functions on the same mesh.

Using Lemma~\ref{le:potential_boundary_conditions}, we can eliminate
the elements of the null space:
Given a function $u\in\Hbloch$, for $k\neq 0$ we can find a
potential $\varphi\in H^1(\Omega)$ that
satisfies the scalar Bloch boundary conditions \cref{eq:bloch_scalar}
by solving
\begin{align*}
  m(\nabla v,\nabla\varphi)
  &= m(\nabla v, u) &
  &\text{ for all } v\in H^1(\Omega)
    \text{ with } \cref{eq:bloch_scalar}.
\end{align*}
Obviously, this $\varphi$ will satisfy
\begin{align*}
  m(\nabla v, u - \nabla\varphi) &= 0 &
  &\text{ for all } v\in H^1(\Omega)
    \text{ with } \cref{eq:bloch_scalar},
\end{align*}
i.e., $u-\nabla\varphi$ will be perpendicular on all gradients and
therefore also perpendicular on the null space of the $\scurl$ operator.
Due to the special properties of N\'ed\'elec elements
\cref{eq:potential_discrete}, the same holds for the discrete setting,
i.e., we have $A L = 0$, and by solving the equation
\begin{equation*}
  L^* M L \varphi_h = L^* M u_h
\end{equation*}
and computing $u_h' := u_h - L \varphi_h$, we can ensure that the vector
$u_h'$ is perpendicular on the null space of $A$, i.e., that the zero
eigenvalue is eliminated.
In the special case $k=0$, we can either eliminate the remaining
two-dimensional subspace explicitly or simply disregard the zero eigenvalue.

Since we are typically interested in computing not just one, but
several eigenvectors corresponding to the smallest non-zero eigenvalues,
we employ a block method:
The iterates are matrices $E^{(m)}\in\bbbc^{\Jdx\times p}$, where $p\in\bbbn$
denotes the number of eigenvectors computed simultaneously.
In order to avoid all columns converging to the same eigenspace, we
ensure that the columns are an orthonormal basis with respect to the
mass matrix $M$, i.e., $(E^{(m)})^* M E^{(m)} = I$ has to hold for
all $m\in\bbbn_0$.
One step of the preconditioned inverse iteration takes the form
\begin{align*}
  \widehat{E}^{(m+1)}
  &= E^{(m)} - B ( A E^{(m)} - M E^{(m)} \Lambda_m ), &
  \Lambda_m &:= (E^{(m)})^* A E^{(m)},
\end{align*}
where $(E^{(m)})^* M E^{(m)} = I$ allows us to simplify the generalized
Rayleigh quotient $\Lambda_m$.
Unfortunately, $\widehat{E}^{(m+1)}$ will usually not satisfy our
$M$-orthonormality assumption, so we have to orthonormalize it.
For the sake of numerical stability, we use a generalized Householder
factorization of $\widehat{E}^{(m+1)}$:
We start with a prescribed $M$-orthonormal basis $P\in\bbbc^{\Jdx\times p}$,
i.e., $P^* M P = I$, in our case a suitable choice of canonical $M$-unit vectors
with disjoint supports, and find Householder vectors
$v_1,\ldots,v_p\in\bbbc^\Jdx\setminus\{0\}$ with generalized reflections
\begin{equation*}
  Q_i := I - 2 v_i \frac{v_i^* M}{v_i^* M v_i}
\end{equation*}
such that $Q_p Q_{p-1} \ldots Q_2 Q_1 \widehat{E}^{(m+1)} = P R$, where
$R\in\bbbc^{p\times p}$ is a right upper triangular matrix.
These generalized reflections satisfy $Q_i^2=I$, and
\begin{align*}
  \langle x, M Q_i y \rangle
  &= \langle x, M y \rangle
   - 2 \langle x, M v_i \rangle
     \frac{\langle v_i, M y \rangle}{\langle v_i, M v_i \rangle}
   = \langle x, M y \rangle
   - 2 \langle \frac{\overline{\langle x, M v_i \rangle}}
                    {\langle v_i, M v_i \rangle} v_i, M y \rangle\\
  &= \langle x - 2 v_i \frac{\langle v_i, M x \rangle}
                            {\langle v_i, M v_i \rangle}, M y \rangle
   = \langle Q_i x, M y \rangle
   \qquad\text{ for all } x,y\in\bbbc^\Jdx
\end{align*}
shows that they are also $M$-selfadjoint.

If $R$ is invertible, we obtain
\begin{align*}
  Q_p Q_{p-1} \ldots Q_2 Q_1 \widehat{E}^{(m+1)} R^{-1} &= P, &
  \widehat{E}^{(m+1)} R^{-1} = Q_1 Q_2 \ldots Q_{p-1} Q_p P,
\end{align*}
and the right-hand side $E^{(m+1)} := Q_1 Q_2 \ldots Q_{p-1} Q_p P$ is
$M$-orthonormal by construction, so we can use it for the next iteration step.

If $R$ is not invertible, $\widehat{E}^{(m+1)}$ cannot have full rank, i.e.,
we have started the iteration with an unsuitable initial guess.
Fortunately, in this case our choice of $E^{(m+1)}$ is still $M$-orthonormal
and its range will contain the range of $\widehat{E}^{(m+1)}$, so the
algorithm corrects the problem by implicitly extending the basis and
guarantees that we always have an $M$-orthonormal basis at our disposal.

A simple version of the resulting modified block preconditioned inverse
iteration takes the following form:
\begin{quotation}
\medskip
\begin{tabbing}
  Find $E^{(0)}\in\bbbc^{\Jdx\times p}$ with $(E^{(0)})^* M E^{(0)} = I$.\\
  $\Lambda_0 \gets (E^{(0)})^* A E^{(0)}$\\
  $m \gets 0$\\
  \texttt{while} $\|A E^{(m)} - E^{(m)} \Lambda_m\|$ too large\\
  \quad\= $\widehat E^{(m+1)} \gets
       E^{(m)} - B (A E^{(m)} - M E^{(m)} \Lambda_m)$\\
  \> Solve $L^* M L \Phi_{m+1} = L^* M \widehat{E}^{(m+1)}$\\
  \> $\widehat{E}^{(m+1)} \gets \widehat{E}^{(m+1)} - L \Phi_{m+1}$\\
  \> Generalized Householder factorization
       $E^{(m+1)} R^{(m+1)} = \widehat{E}^{(m+1)}$\\
  \> \quad with $(E^{(m+1)})^* M E^{(m+1)} = I$\\
  \> $\Lambda_{m+1} \gets (E^{(m+1)})^* A E^{(m+1)}$\\
  \> $m \gets m+1$\\
  \texttt{end}
\end{tabbing}
\medskip
\end{quotation}
We perform one step of the preconditioned inverse iteration for every
column of $E^{(m)}$, eliminate the null space by ensuring that all
iteration vectors are orthogonal on the space of gradients, and
turn the resulting vectors into an orthonormal basis.

We can speed up convergence considerably by computing the
\emph{Ritz vectors and values}, i.e., the $p$-dimensional Schur decomposition
\begin{align}\label{eq:ritz_eigenvalue}
  U_{m+1}^* \Lambda_{m+1} U_{m+1} &= D_{m+1}, &
  D_{m+1} &= \begin{pmatrix}
    \lambda_1 & & \\
    & \ddots & \\
    & & \lambda_p
  \end{pmatrix},\ \lambda_1\leq\lambda_2\leq \ldots\leq \lambda_p
\end{align}
with a unitary matrix $U_{m+1}\in\bbbc^{p\times p}$ and a real diagonal
matrix $D_{m+1}\in\bbbc^{p\times p}$ and replacing $E^{(m+1)}$ by
$E^{(m+1)} U_{m+1}$.
The latter matrix still has $M$-orthonormal columns, but these
columns are now approximations of the eigenvectors of $A$ and $M$,
while $\lambda_1,\ldots,\lambda_p$ are approximations of the corresponding
eigenvalues.

Since the $p$ smallest eigenvalues are the $p$ smallest local minima
of the Rayleigh quotient, we can improve the convergence speed by
looking for local minima not only in the range of $\widehat{E}^{(m+1)}$,
but in a larger subspace constructed by including the range of $E^{(m)}$
or even the range of $E^{(m-1)}$ if $m>1$.
In the first case, i.e., if we compute the generalized Householder
factorization
\begin{equation*}
  E^{(m+1)} R^{(m+1)}
  = \begin{pmatrix} \widehat{E}^{(m+1)} & E^{(m)} \end{pmatrix},
\end{equation*}
we arrive at the \emph{gradient method} for the minimization of the Rayleigh
quotient.
In the second case, i.e., if we compute the generalized Householder
factorization
\begin{equation*}
  E^{(m+1)} R^{(m+1)}
  = \begin{pmatrix}
      \widehat{E}^{(m+1)} & E^{(m)} & E^{(m-1)}
    \end{pmatrix}
\end{equation*}
if $m>1$, we get the \emph{locally optimal block preconditioned conjugate gradient
(LOBPCG)} method \cite{KN01} for the minimization task.
\begin{quotation}
\medskip
\begin{tabbing}
  Find $E^{(0)}\in\bbbc^{\Jdx\times p}$ with $(E^{(0)})^* M E^{(0)} = I$.\\
  $\Lambda_0 \gets (E^{(0)})^* A E^{(0)}$\\
  $m \gets 0$\\
  \texttt{while} $\|A E^{(m)} - E^{(m)} \Lambda_m\|$ too large\\
  \quad\= $\widehat E^{(m+1)} \gets
       E^{(m)} - B (A E^{(m)} - M E^{(m)} \Lambda_m)$\\
  \> Solve $L^* M L \Phi_{m+1} = L^* M \widehat{E}^{(m+1)}$\\
  \> $\widehat{E}^{(m+1)} \gets \widehat{E}^{(m+1)} - L \Phi_{m+1}$\\
  \> Householder factorization
       $E^{(m+1)} R^{(m+1)} = \begin{pmatrix}
          \widehat{E}^{(m+1)} & E^{(m)}
        \end{pmatrix}$\\
  \> \quad or $E^{(m+1)} R^{(m+1)} = \begin{pmatrix}
          \widehat{E}^{(m+1)} & E^{(m)} & E^{(m-1)}
        \end{pmatrix}$\\
  \> \quad with $(E^{(m+1)})^* M E^{(m+1)} = I$\\
  \> Solve \cref{eq:ritz_eigenvalue}, i.e.,
       $(E^{(m+1)})^* A E^{(m+1)} = U_{m+1} D_{m+1} U_{m+1}^*$\\
  \> $E^{(m+1)} \gets (E^{m+1} U_{m+1})|_{\Jdx\times p}$\\
  \> $\Lambda_{m+1} \gets D_{m+1}|_{p\times p}$\\
  \> $m \gets m+1$\\
  \texttt{end}
\end{tabbing}
\medskip
\end{quotation}
By construction, the columns of $E^{(m)}$ will be bi-orthogonal, i.e.,
orthonormal with respect to the $M$ inner product and orthogonal
with respect to the $A$ inner product.
If the range of $E^{(m)}$ is a good approximation of an invariant
subspace, the columns of $E^{(m)}$ are good approximations of eigenvectors
spanning this subspace.

\section{Geometric multigrid method}
\label{se:geometric_multigrid_method}

The preconditioned inverse iteration requires an efficient preconditioner
that approximates $A^{-1}$ sufficiently well.
In our case, $A$ is only positive semidefinite, so we replace it by
the positive definite matrix $A + \mu M$, where $\mu>0$ is a
regularization parameter.
This only shifts the eigenvalues by $\mu$ and does not change the
eigenvectors.

We employ a standard geometric multigrid method for Maxwell's equations
with suitable adjustments:
following \cite{ARFAWI00}, we use a block Gauss-Seidel smoother, where
each of the overlapping blocks corresponds to all edges connected to
a node of the grid, taking periodicity into account.
The four-dimensional linear systems corresponding to the individual
blocks are self-adjoint and positive definite and have one eigenvalue
that is considerably smaller than the others, which leads to a large
condition number and therefore poor numerical stability of the original
implementation.
The problematic eigenvalue corresponds to the gradient of the nodal
basis function of the current grid node, so we can employ an orthogonal
transformation to separate this eigenspace from its orthogonal complement.
Solving the resulting block-diagonal system using a standard Cholesky
factorization leads to a numerically stable algorithm.

The hierarchy of coarse grids is constructed by simple bisection.
This approach allows us to use the simple identical embedding as a
natural prolongation mapping the coarse grid into the next-finer grid,
and we can use the standard Galerkin approach to construct the corresponding
coarse-grid matrices:
The finest mesh has to be sufficiently fine to resolve the jumps in
the permittivity parameter $\epsilon$, so the corresponding mass and
stiffness matrices can be constructed by standard quadrature.
For a coarse mesh, we map trial and test basis functions to the next-finer
mesh using the natural embedding as a prolongation and evaluate the bilinear
form there.
In this way, the exact mass and stiffness matrices can be constructed
for all meshes in linear complexity.

In order to handle the null space, we have to solve linear systems
with the self-adjoint matrix $P := L^* M L$ corresponding to the discrete
Laplace operator with Bloch boundary conditions \cref{eq:bloch_scalar}.
If $k\neq 0$, $P$ is positiv definite, in the special case $k=0$ it
is positiv semidefinite with a two-dimensional null space spanned by
discretized linear polynomials.
The matrices $P$ for the entire mesh hierarchy can again be constructed by
the Galerkin approach, and we can use the corresponding standard multigrid
iteration to approximate the null-space projection.
Our experiments indicate that a few multigrid steps are sufficient to
stop the eigenvector approximation from converging to the null space,
we do not have to wait for the multigrid iteration to compute the
exact projection.

\section{Extrapolation}
\label{se:extrapolation}

Since the preconditioned inverse iteration is non-linear due to the
non-linear influence of the Rayleigh quotient $\Lambda_m$, it is
crucial to provide it with good initial guesses for the eigenvectors.

For the first Bloch parameter $k$ under consideration, we employ
a simple nested iteration:
on the coarsest mesh, the eigenvalue problem is solved by a direct
method.
Once the eigenvectors for the mesh level $\ell$ have been computed at
a sufficient accuracy, we use the prolongation to map them to the
next-finer grid level $\ell+1$ and use them as initial guesses for
the iteration on this level.

This procedure is only applied for the first Bloch parameter.

For all other Bloch parameters, we use an algorithm that is related
to extrapolation:
Assume that eigenvector bases $E_1,E_2,\ldots,E_e$, $e\in\bbbn$, have been
computed in previous steps for Bloch parameters ``close'' to the current
parameter $k$.
For standard polynomial extrapolation, we would have to construct polynomials
$p_i$ such that $p_i(E_1,E_2,\ldots,E_e)$ is a good approximation of the
$i$-th eigenvector.
Fortunately, we can again use the Courant-Fischer theorem to avoid this
task:
instead of constructing polynomials $p_i$ explicitly, we look for the
$p$ smallest non-zero minima of the Rayleigh quotient in the space
spanned by the ranges of $E_1,\ldots,E_e$.
The corresponding vectors form an $M$-orthonormal basis of a subspace
that serves as our initial guess for the eigenvectors for the current
Bloch parameter.

Due to the Courant-Fischer theorem, the vectors constructed in this
way are at least as good as the best possible polynomial approximation,
i.e., at least as good as the best extrapolation scheme.

Our experiments indicate that quadratic one-dimensional extrapolation
is sufficient to provide good initial guesses for the eigenvector
iteration.
Denoting the sampled Bloch parameters by
\begin{align*}
  k_{ij} &:= (\tfrac{\pi}{a} (2 \tfrac{i}{\kappa-1}-1),
             \tfrac{\pi}{b} (2 \tfrac{j}{\kappa-1}-1) &
  &\text{ for all } i,j\in[0:\kappa-1],
\end{align*}
we apply extrapolation as follows:
$k_{00}$ is computed directly.
$k_{10}$ is extrapolated ``horizontally'' using $k_{00}$.
$k_{20}$ is extrapolated ``horizontally'' using $k_{00}$ and $k_{10}$.
$k_{i0}$ is extrapolated ``horizontally'' using $k_{i-3,0}$, $k_{i-2,0}$,
and $k_{i-1,0}$ for all $i\in[3:\kappa-1]$.

$k_{01}$, $k_{11}$, and $k_{21}$ are extrapolated ``vertically'' using
 $k_{00}$, $k_{10}$, and $k_{20}$, respectively.
$k_{02}$, $k_{12}$, and $k_{22}$ are extrapolated ``vertically'' using
$k_{00}$ and $k_{01}$, $k_{10}$ and $k_{11}$, and $k_{20}$ and $k_{21}$,
respectively.
$k_{0j}$, $k_{1j}$, and $k_{2j}$ are extrapolated ``vertically'' using
three points for $j\geq 3$.
$k_{ij}$ is extrapolated ``horizontally'' using $k_{i-3,j}$,
$k_{i-2,j}$, and $k_{i-1,j}$ for $i,j\geq 3$.

\section{Throw-away eigenvectors}
\label{se:throwaway_eigenvectors}

We have found that the speed of convergence can be improved
significantly by performing the preconditioned inverse iteration
not only for the $p$ vectors we are actually interested in,
but for a few more ``throw-away eigenvectors'' that only serve
to speed up the rate of convergence.

To motivate this approach, we consider the basic block inverse
iteration
\begin{align*}
  E^{(m+1)} &= A^{-1} E^{(m)} &
  &\text{ for all } m\in\bbbn_0,
\end{align*}
where $E^{(m)}\in\bbbc^{n\times p}$ and $A\in\bbbc^{n\times n}$
is a self-adjoint positive definite matrix.
Since $A$ is self-adjoint, we can find a unitary matrix
$Q\in\bbbc^{n\times n}$ and eigenvalues $0<\lambda_1\leq\lambda_2
\leq\ldots\leq\lambda_n$ with
\begin{equation*}
  Q^* A Q = D = \begin{pmatrix}
    \lambda_1 & & \\
    & \ddots & \\
    & & \lambda_n
  \end{pmatrix}.
\end{equation*}
For the transformed iterates
\begin{align*}
  \widehat{E}^{(m)} &:= Q^* E^{(m)} &
  &\text{ for all } m\in\bbbn_0
\end{align*}
we have
\begin{align*}
  \widehat{E}^{(m+1)} &= D^{-1} \widehat{E}^{(m)} &
  &\text{ for all } m\in\bbbn_0.
\end{align*}
We define
\begin{align*}
  \begin{pmatrix}
    F\\ R
  \end{pmatrix} &= \widehat{E}^{(0)}, &
  F &\in\bbbc^{p\times p},\ R\in\bbbc^{(n-p)\times p},\\
  \begin{pmatrix}
    D_p & \\
    & D_\perp
  \end{pmatrix} &= D, &
  D_p &\in\bbbc^{p\times p},\ D_\perp\in\bbbc^{(n-p)\times(n-p)}
\end{align*}
and observe
\begin{align*}
  \widehat{E}^{(m)}
  &= D^{-m} \widehat{E}^{(0)}
   = \begin{pmatrix}
       D_p^{-m} F\\
       D_\perp^{-m} R
     \end{pmatrix} &
  &\text{ for all } m\in\bbbn_0.
\end{align*}
Since we want to approximate a basis for the invariant subspace
spanned by the first $p$ eigenvectors, we have to assume that $F$
has full rank, i.e., that it has to be invertible.
This assumption leads to
\begin{align*}
  \widehat{E}^{(m)} F^{-1}
  &= \begin{pmatrix}
       D_p^{-m}\\
       D_\perp^{-m} R F^{-1}
     \end{pmatrix} &
  &\text{ for all } m\in\bbbn_0.
\end{align*}
Let $i\in[1:p]$ and denote the $i$-th columns of these matrices
by $e_i^{(m)}$ and the $i$-th canonical unit vector by $\delta_i$.
Due to $\|D_\perp^{-m}\| \leq |\lambda_{p+1}|^{-m}$, our equation implies
\begin{align*}
  \tan\angle(e_i^{(m)}, \delta_i)
  &\leq C \left(\frac{|\lambda_i|}{|\lambda_{p+1}|}\right)^m &
  &\text{ for all } m\in\bbbn_0,
\end{align*}
i.e., the $i$-th column of $\widehat{E}^{(m)}$ converges to
the $i$-th eigenvector of $D$ at a rate of $|\lambda_i|/|\lambda_{p+1}|$,
and therefore the $i$-th column of $E^{(m)}$ converges to
the $i$-th eigenvector of $A$ at the same rate.
This means that we can expect the convergence rate to improve
if we increase $p$.

%
%
\begin{figure}
\begin{center}
  \includegraphics[width=0.8\textwidth]{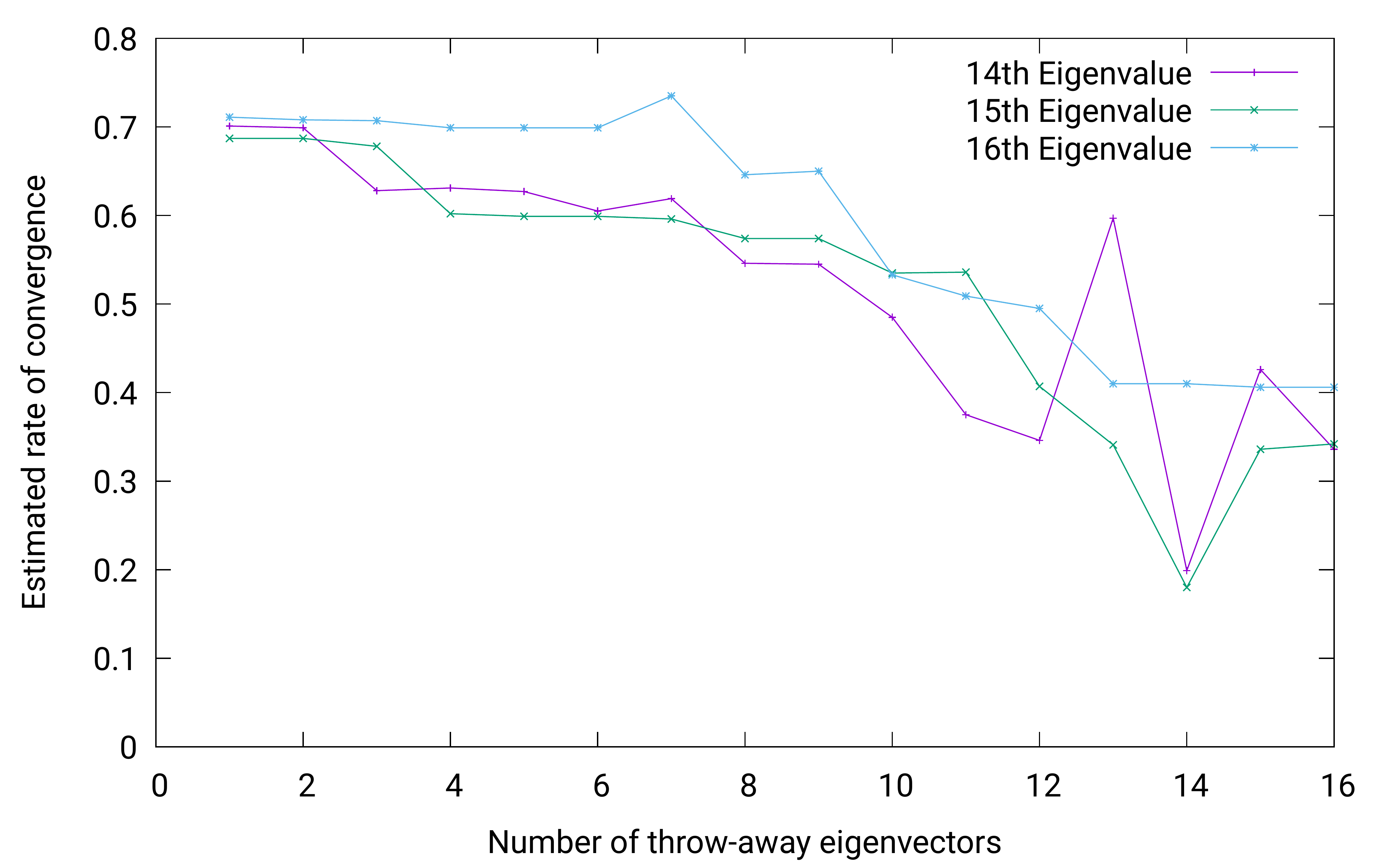}
\end{center}
\caption{Experimentally-observed rates of convergence for the
  $14$th, $15$th, and $16$th non-zero eigenvalues depending on
  the number of throw-away eigenvectors}
\label{fi:throwaway}
\end{figure}

In our implementation, we use the \emph{preconditioned} block
inverse iteration and orthonormalize the iterates after every
step.
\Cref{fi:throwaway} shows the results of an experiment with
the permittivity
\begin{align*}
  \epsilon(x) &= \begin{cases}
    100 &\text{ if } \|x-(1/2,1/2)\|\leq 1/3,\\
    1 &\text{ otherwise}
  \end{cases} &
  &\text{ for all } x\in[0,1]\times[0,1],
\end{align*}
where we aim to compute the first $16$ non-zero eigenvalues and
add between $1$ and $16$ further ``throw-away'' eigenvectors to
speed up convergence.
We can see that the experimentally observed rate of convergence
indeed is improved by adding more eigenvectors.

Of course, computing more eigenvectors increases the computational
work (for $p\ll n$, we expect to need $\mathcal{O}(n p^2)$ operations),
but our experiments indicate that the impact is more than compensated by the
decrease in the number of required iteration steps if we base our
stopping criterion only on the convergence of the relevant eigenvectors.

\section{Numerical experiments}
\label{se:numerical_experiments}

Since we can expect the eigenvalues to depend smoothly on
the Bloch parameter, we can replace the entire Bloch parameter
set $[-\pi/a,\pi/a]\times[-\pi/b,\pi/b]$ by a sufficiently fine equidistant
grid.
For our experiment, we choose $a=b=1$ and a grid with $30\times 30$
points

Maxwell's equation is discretized on a coarse grid on the domain
$[0,1]\times[0,1]$ with $16\times 16$ square elements on the coarsest
mesh and $1024\times 1024$ square elements on the finest.
On the finest mesh, we therefore have $2\,097\,152$ N\'ed\'elec
basis functions.

We choose the piecewise constant permittivity function
\begin{align*}
  \epsilon(x) &= \begin{cases}
    11.56 &\text{ if } \|x-(0.5,0.5)\|\leq 0.18,\\
    1 &\text{ otherwise}
  \end{cases} &
  &\text{ for all } x\in[0,1]\times[0,1]
\end{align*}
introduced by \cite{MIKI03}.

We compute the first $16$ eigenvalues using the block preconditioned
inverse iteration.
We stop the iteration as soon as the defects
$\|A e^{(m)} - M e^{(m)} \lambda_m\|_2$ of all eigenvector approximations
drops below $10^{-2}$.
Considering the scaling behaviour of the spectral norm as the grid
is refined, this accuracy has been sufficient in our experiments.

In order to improve the rate of convergence, we compute $8$ additional
``throw-away'' eigenvector approximations, but they serve only to speed up
convergence and to allow the algorithm to choose the approximations of the
first $16$ eigenvectors from a $24$-dimensional space, they are not considered
for the stopping criterion.
The first and second eigenvalues depending on the Bloch parameter $k$
are displayed in \cref{fi:bloch_eigenvalues1}, the
third and fourth in \cref{fi:bloch_eigenvalues2}.
We observe that the eigenvalues change smoothly with the Bloch
parameter.
For all eigenvectors, the stopping criterion was reached, so the
computed vectors are good approximations of the exact eigenvectors.

%
%
\begin{figure}
\begin{center}
  \includegraphics[width=0.8\textwidth]{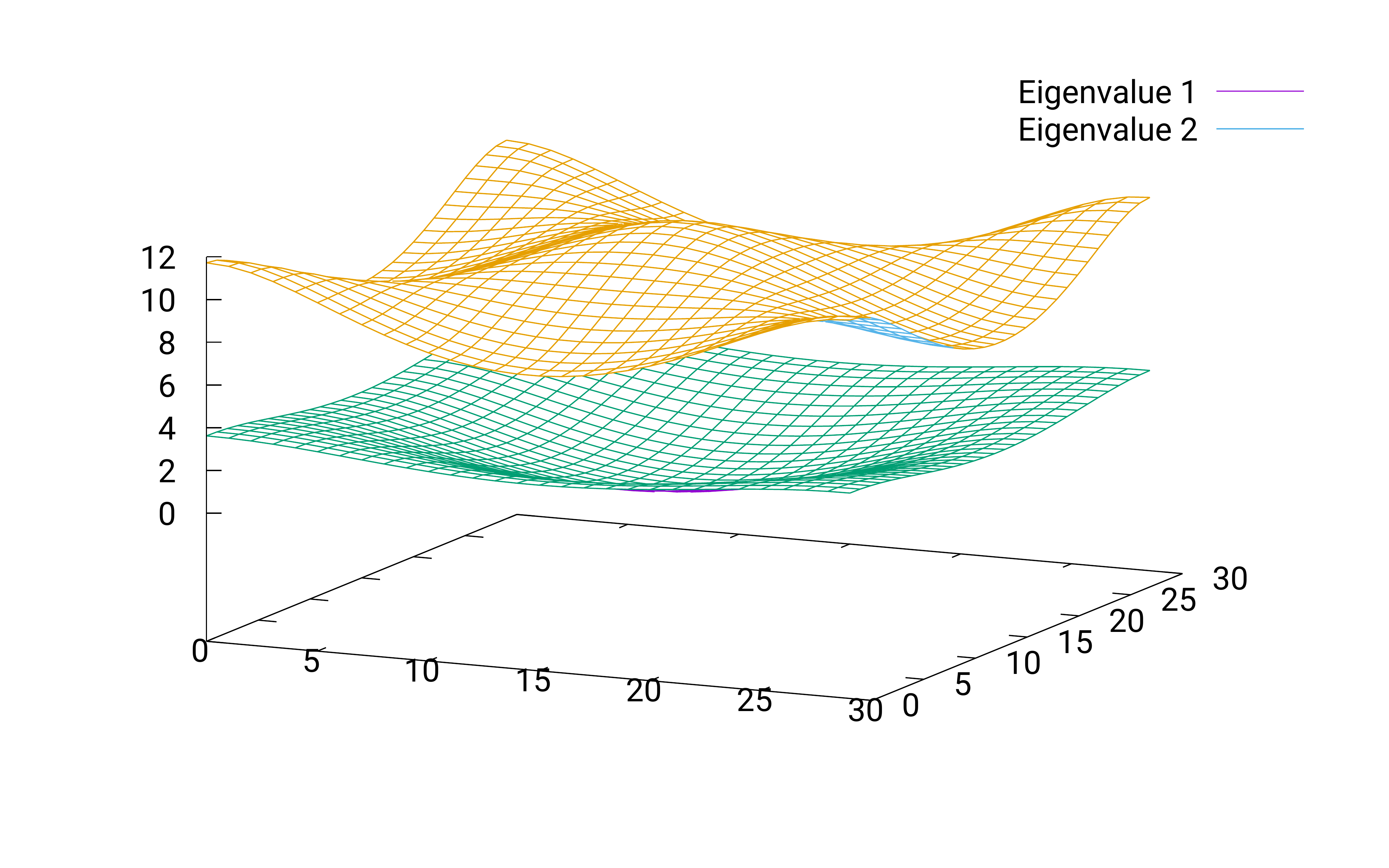}
\end{center}
\caption{First and second eigenvalues depending on the Bloch parameter}
\label{fi:bloch_eigenvalues1}
\end{figure}

%
%
\begin{figure}
\begin{center}
  \includegraphics[width=0.8\textwidth]{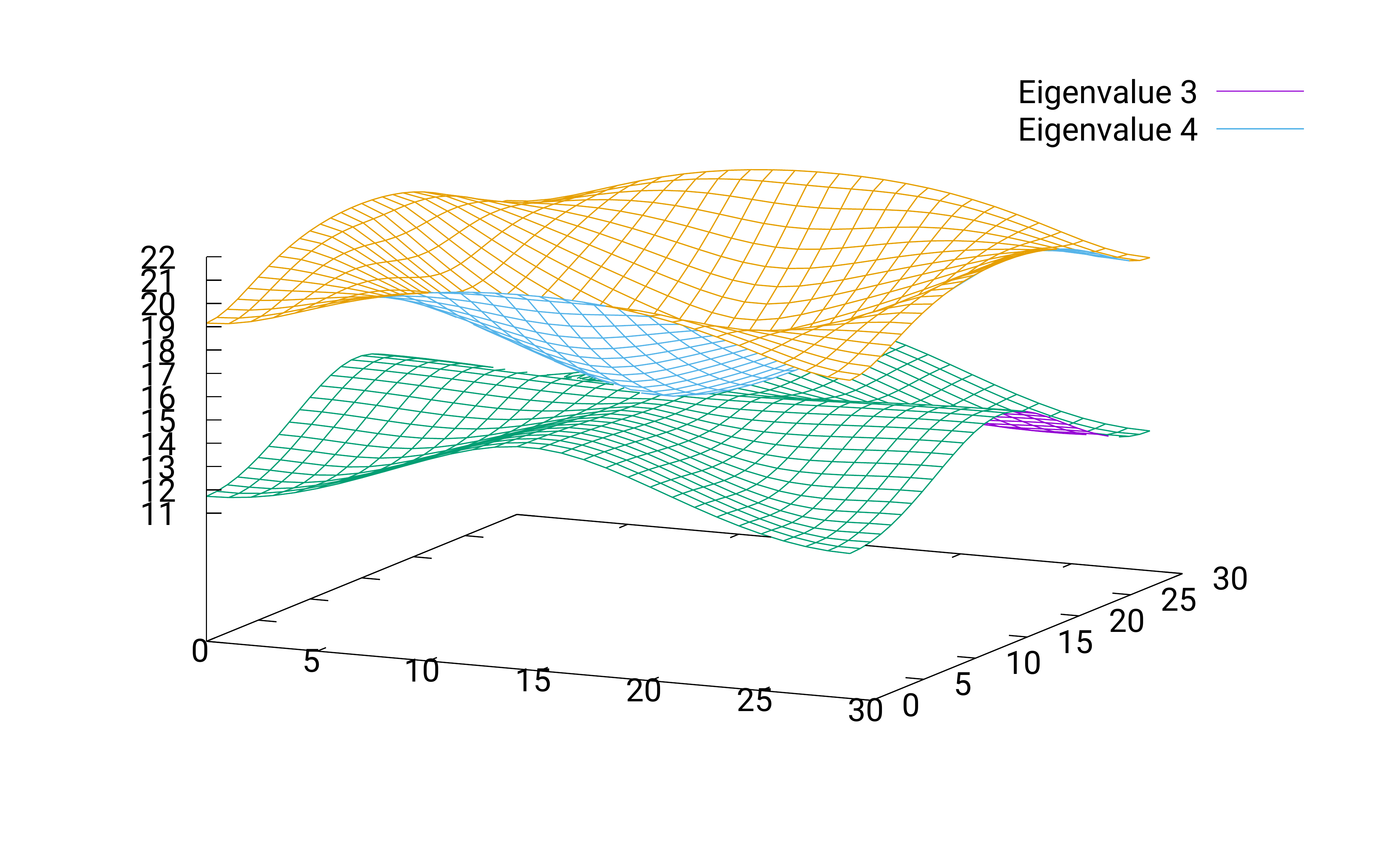}
\end{center}
\caption{Third and fourth eigenvalues depending on the Bloch parameter}
\label{fi:bloch_eigenvalues2}
\end{figure}

Of course, we are interested in the numerical performance of our
method.
\Cref{fi:bloch_iterations} shows the number of iterations required
for the different Bloch values.
We can see that our extrapolation method works very well:
extrapolating between adjacent Bloch values to obtain an initial
guess for the preconditioned inverse iteration reduces the number
of required iteration steps to less than $4$ in most of the
cases.
Only close to the special case $k=0$ (in the four corners of the
diagram), the algorithm requires a significantly increased number
of steps.

%
%
\begin{figure}
\begin{center}
  \includegraphics[width=0.8\textwidth]{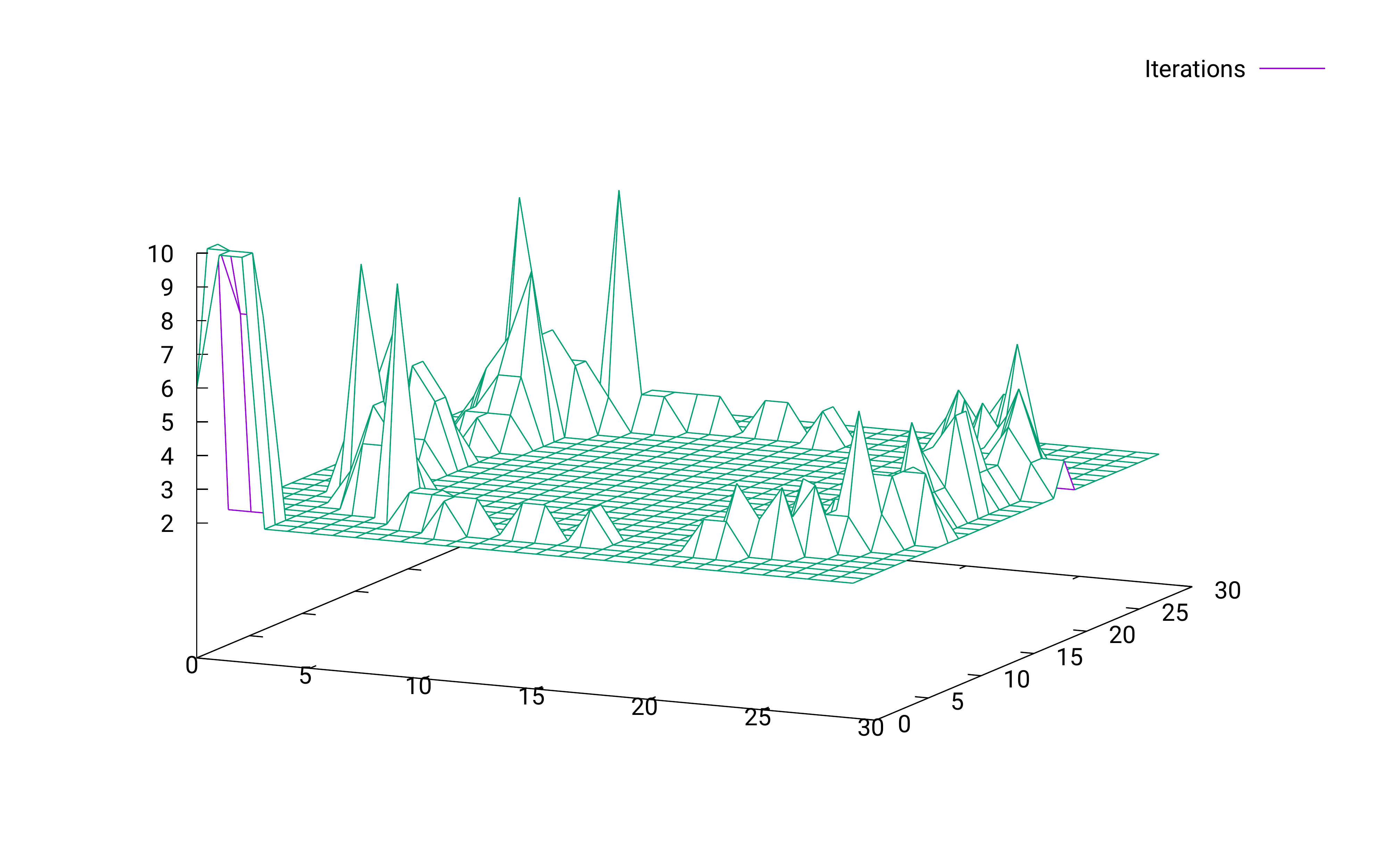}
\end{center}

  \caption{Iterations required to obtain a residual norm
    below $10^{-2}$}
  \label{fi:bloch_iterations}
\end{figure}


\section*{Acknowledgments}
We would like to acknowledge the support of the Kiel Nano and
Interface Science (KiNSIS) initiative.

\bibliographystyle{plain}
\bibliography{scicomp}

\end{document}